\tikzset{commutative diagrams/diagrams={baseline=-3pt}}
\DeclareFontFamily{OT1}{rsfs}{}
\DeclareFontShape{OT1}{rsfs}{n}{it}{<-> rsfs10}{}
\DeclareMathAlphabet{\curly}{OT1}{rsfs}{n}{it}
\newcommand{\xyExactTriangle}[3]{\ensuremath{\xymatrix@C=.6em{ & #3 \ar@{-->}[dl] & \\ #1 \ar[rr] & & #2 \ar[ul] }}}
\newcommand{\exactTriangleWithMaps}[6]{\ensuremath{\xymatrix@C=.6em{ & #5 \ar@{-->}_{#6}[dl] & \\ #1 \ar^{#2}[rr] & & #3 \ar_{#4}[ul] }}}
\newcommand{\xyFlatExactTriangle}[3]{\ensuremath{\xymatrix{#1\ar[r] & #2 \ar[r] & #3 \ar@{-->}[r] &}}}
\newcommand{\flatExactTriangleWithMaps}[6]{\ensuremath{\xymatrix{#1\ar^{#2}[r] & #3 \ar^{#4}[r] & #5 \ar@{-->}^{#6}[r] &}}}
\newcommand{\variableLengthPostnikov}[9]{\ensuremath{\xymatrix@!R@R=-3em@C=-3em{ & #1 \ar[rr] \ar[dr] & & #2 \ar[dr]  & \ldots & \ldots & \ldots & #3 \ar[rr] \ar[dr] & & #4 \ar[dr] & \\ #5 \ar[ur] & & #6 \ar@{-->}[ll] \ar[ur] & & #7 \ar@{-->}[ll]& \ldots  & #8 \ar[ur] & & #9 \ar[ur] \ar@{-->}[ll] & & 0 \ar@{-->}[ll]}}}
\newcommand{\leftEndPostnikov}[9]{\ensuremath{\xymatrix@!R@R=-2em@C=-2em{ & #1 \ar[rr] \ar[dr] & & #2 \ar[dr]  & \ldots & \\ #5 \ar[ur] & & #6 \ar@{-->}[ll] \ar[ur] & & #7 \ar@{-->}[ll]& \ldots }}}
\newcommand{\rightEndPostnikov}[9]{\ensuremath{\xymatrix@!R@R=1em@C=1em{ & \ldots & #3 \ar[rr] \ar[dr] & & #4 \ar[dr] & \\ \ldots & #8 \ar[ur] & & #9 \ar[ur] \ar@{-->}[ll] & & 0 \ar@{-->}[ll]}}}
\newcommand{\verticalPostnikov}[9]{\ensuremath{\opt{ams}{\xymatrix@!R=0.05em}\opt{lms}{\xymatrix@R=0.4em@C=0em}{& #5 \ar[ld] \\ #1 \ar[dd] \ar[rd] & \\ & #6 \ar@{-->}[uu] \ar[ld] \\ #2 \ar[rd] & \\ \vdots & #7 \ar@{-->}[uu] \\ \vdots & \vdots \\ \vdots & #8 \ar[ld] \\ #3 \ar[dd] \ar[rd] & \\& #9 \ar@{-->}[uu] \ar[ld] \\ #4 \ar[rd] & \\& 0 \ar@{-->}[uu] \\}}}
\newcommand{\quotes}[1]{\textquoteleft{#1}'}
\newcommand{\cone}[1]{\ensuremath{\operatorname{Cone}\left({#1}\right)}}
\newcommand{\setconds}[2]{\ensuremath{\{\begin{array}{c|c} #1 & #2 \end{array}}\}}
\newcommand\C{\mathbb C}
\renewcommand\k{\mathfrak k}
\newcommand\cO{\mathcal O}
\newcommand\PP{\mathbb P}
\newcommand\Z{\mathbb Z}
\newcommand\To{\longrightarrow}
\newcommand\onto{\twoheadrightarrow}
\newcommand\into{\hookrightarrow}
\newcommand\iso{\simeq}
\newcommand\xyhook{\ar@{^{(}->}}
\newcommand\xyhookup{\ar@{^{(}->}}
\newcommand\xyhookdown{\ar@{_{(}->}}
\newcommand\xybend{\ar@/^/}
\newcommand\xydoublebend{\ar@/^2pc/}
\newcommand\xybendup{\ar@/^/}
\newcommand\xybenddown{\ar@/_/}
\newcommand\xyequals{\ar@{=}}
\newcommand\xysubset{\ar@{}|-*{\subset}}
\newcommand\xybirational{\ar@{-->}}
\newcommand\xyquiverA{\ar@{->>}@/^/}
\renewcommand\Im{\operatorname{Im}}
\newcommand\Hom{\operatorname{Hom}}
\newcommand\RDerived{\mathbb{R}}
\newcommand\LDerived{\mathbb{L}}
\newcommand\Aut{\operatorname{Aut}}
\newcommand\End{\operatorname{End}}
\newcommand\pt{\operatorname{pt}}
\newcommand\Sym{\operatorname{Sym}}
\newcommand\D{\mathcal D}
\newcommand\mcE{\mathcal E}
\newcommand\mcF{\mathcal F}
\newcommand\mcT{\mathcal T}
\newcommand\mcW{\mathcal W}
\newcommand\mfX{\mathfrak X}
\newcommand{\cW}{\mathcal{W}}
\newcommand{\mfZ}{\mathcal{Z}}
\newcommand{\mfT}{\mathcal{T}}
\newcommand\quot{/\kern-.7ex/}
\newcommand\quotParam[1]{/\kern-.7ex/_{\kern-.4ex#1}}
\newcommand\beq[1]{\begin{equation}\label{#1}}
\newcommand\eeq{\end{equation}}
\newcommand\beqa{\begin{eqnarray*}}
\newcommand\eeqa{\end{eqnarray*}}
\newtheorem{theorem}{Theorem}
\newtheorem{thm}[theorem]{Theorem}
\newtheorem{lemma}[theorem]{Lemma}
\newtheorem{cor}[theorem]{Corollary}
\newtheorem{prop}[theorem]{Proposition}
\newtheorem{defn}[theorem]{Definition}
\newtheorem{eg}[theorem]{Example}
\newtheorem{remark}[theorem]{Remark}
\numberwithin{theorem}{section}
\newtheorem{keythm}{Theorem}
\newcommand{\emphasis}[1]{{\em #1}}
\newcommand{\marginparstretch}{0.8}
\let\oldmarginpar\marginpar
\renewcommand\marginpar[1]{\-\oldmarginpar[\framebox{\setstretch{\marginparstretch}\begin{minipage}{\marginparwidth}{\raggedleft\footnotesize #1}\end{minipage}}]{\framebox{\setstretch{\marginparstretch}\begin{minipage}{\marginparwidth}{\raggedright\footnotesize #1}\end{minipage}}}}
\newcommand{\Gr}{\mathbbm{Gr}}
\newcommand{\quotStack}[2]{\left[ \frac{#1}{#2} \right]}
\newcommand{\spacedoplus}{\;\, \oplus \;\,}
\newcommand{\spacedtimes}{\;\, \times \;\,}
\newcommand{\row}[2]{\operatorname{row}_{#1}({#2})}
\newcommand{\col}[2]{\operatorname{col}_{#1}({#2})}
\newcommand{\PlainYoungDiag}[6]{

\def\xStart{#2} \def\yStart{#3}
\def\xSize{#4} \def\ySize{#5}

\draw (\xStart,\yStart) -- (\xStart+\xSize,\yStart);
\draw (\xStart,\yStart) -- (\xStart,\yStart+\ySize);
\draw[help lines] (\xStart,\yStart) grid (\xStart+\xSize,\yStart+\ySize);

\newcounter{#1row}
\foreach \rowLength in #6 {
  \ifnum\rowLength>0 {
    \foreach \col in {1,...,\rowLength}{
      \draw[fill=gray!50] (\xStart+\col-1,\yStart+\value{#1row}) rectangle (\xStart+\col,\yStart+\value{#1row}+1) ; } }
  \fi
  \stepcounter{#1row} }
}
\newcommand{\YoungDiag}[7]{


\def\xStart{#2} \def\yStart{#3}
\def\xSize{#4} \def\ySize{#5}
\draw   node at (\xStart+\xSize/2,\yStart-0.75) ()  {#7};
\PlainYoungDiag{#1}{#2}{#3}{#4}{#5}{#6}

}
\newcommand{\BibliographyLocation}{../../Bibliography}
\newcommand{\affiliationAddress}{School of Mathematics and Maxwell Institute of Mathematics, University of Edinburgh, Edinburgh, EH9 3JZ}
\newcommand{\affiliationCountry}{U.K.}
\newcommand{\affiliationEmail}{Will.Donovan@ed.ac.uk}
\begin{document}
\title[Grassmannian twists, derived equivalences and brane transport]{ Grassmannian twists, derived equivalences and brane transport }
\author{Will Donovan}

\opt{ams}{
\address{\affiliationAddress, \affiliationCountry}
\email{\affiliationEmail}

\thanks{\emph{Prepared for the proceedings of String-Math 2012, Bonn.} \\
MSC 2000: Primary 14F05, 18E30; Secondary 14M15}
}

\opt{lms}{
\classno{Primary 14F05, 18E30; Secondary 14M15.}
\extraline{}
}

\begin{abstract}This note is based on a talk given at String-Math 2012 in Bonn, on a joint paper with Ed Segal \cite{donovan:2012wj}. We exhibit derived equivalences corresponding to certain Grassmannian flops. The construction of these equivalences is inspired by work of Herbst--Hori--Page on brane transport for gauged linear $\sigma$-models: in particular, we define `windows' corresponding to their grade restriction rules. We then show how composing our equivalences produces interesting autoequivalences, which we describe as twists and cotwists about certain spherical functors.

Our proofs use natural long exact sequences of bundles on Grassmannians known as twisted Lascoux complexes, or staircase complexes. We give a compact description of these. We also touch on some related developments, and work through some extended examples.
\end{abstract}

\maketitle

\section{Introduction}

It is conjectured \cite[Conjecture 5.1]{Kawamata:2002vq} that for flops between smooth projective complex varieties there exist corresponding \emphasis{derived equivalences}. In this note we discuss a particular set of examples: local models for \emphasis{Grassmannian flops}. These are obtained from geometric invariant theory (GIT) quotients of vector spaces by general linear group actions: the original variety corresponds to a certain stability condition, and the flopped variety to another.

The varieties occurring in our examples may be viewed as large-radius limits of particular gauged linear $\sigma$-models. Herbst, Hori and Page \cite{Herbst:2008wl} studied \emphasis{brane transport} between phases of such models, and produced a description in terms of grade-restriction rules. Inspired by this, we describe brane transport in our examples using \quotes{windows} corresponding to grade-restriction rules. More precisely, we construct equivalences between the bounded derived categories of coherent sheaves on the respective GIT quotients: see Theorem~\ref{keytheorem.equivalences}. By definition, these equivalences are \emphasis{functorial}, and so may be interpreted as describing transport of branes, and also of the strings between them. We then describe \emphasis{monodromy} for the brane transport, which turns out to be given by twists and cotwists about spherical functors: see Theorem~\ref{theorem.twist_cotwist}.

This note is intended to give a concise summary of the results in \cite{donovan:2012wj}, concentrating on examples, and referring to the original paper for proofs and technical details.

\subsection{Results}

We now describe our examples, and give precise statements of our findings. We fix throughout a dimension $d$. Then we have:

\begin{defn}Taking $V$ and $S^{(r)}$ to be complex vector spaces of dimension $d$ and $r$ respectively, we define a global quotient stack
$$\mfX^{(r)} := \quotStack{\Hom(S^{(r)},V)\spacedoplus \Hom(V,S^{(r)})}{\operatorname{GL}(S^{(r)})}.$$
(Here the $\operatorname{GL}(S^{(r)})$-action is the natural one, given by composition of linear maps.)
\end{defn}

We now consider two different GIT quotients, viewed as substacks of $\mfX^{(r)}$:

\begin{defn}
We define substacks $X_{\pm}^{(r)}$ of $\mfX^{(r)}$ by requiring that the first map be injective, or the second surjective, as shown below:
$$\begin{aligned} X_+^{(r)} & := \quotStack{\Hom_{\into} (S^{(r)},V) \spacedoplus \Hom(V,S^{(r)})}{\operatorname{GL}(S^{(r)}) } \\
X_-^{(r)} & := \quotStack{\Hom (S^{(r)},V)\spacedoplus \Hom_{\onto}(V,S^{(r)})}{ \operatorname{GL}(S^{(r)}) } \end{aligned} $$
\end{defn}

\begin{remark}\label{remark.geometry_of_quotients}The geometric interpretation of these spaces $X_{\pm}^{(r)}$ is as follows:
\begin{enumerate}
\item The GIT quotient $X^{(0)}_+$ is simply a point. 
\item The quotient $X^{(1)}_+$ is the total space of the bundle $l^{\oplus d}$ on $\PP V$, where we take $l$ to be tautological subspace bundle on $\PP V$ (which is often denoted by $\cO(-1)$).
\item In general, the quotient $X_+^{(r)}$ is the total space of the bundle $\Hom(V,S^{(r)})$ on the Grassmannian $\Gr(r,V)$, where we reuse the notation $S^{(r)}$ for the tautological subspace bundle on the Grassmannian. After choosing a basis for $V$, this is of course just the bundle $S^{(r) \oplus d}$. A dual construction gives $X_-^{(r)}$: see \cite[Section 3.1]{donovan:2012wj}. 
\end{enumerate}
\end{remark}

\begin{remark}The quotients $X^{(r)}_{\pm}$ should be thought of as corresponding to certain large-radius limits of a gauged linear $\sigma$-model determined by $\mfX^{(r)}$. See Section~\ref{section.physics} for some further remarks on this viewpoint.\end{remark}

We then have:

\begin{keythm}[ {\cite[Theorem 3.7]{donovan:2012wj}} ] \label{keytheorem.equivalences} For each $k \in \mathbb{Z}$, there exists an equivalence of bounded derived categories $$\psi_k : \D(X^{(r)}_+) \overset{\sim}{\To} \D(X^{(r)}_-).$$\end{keythm}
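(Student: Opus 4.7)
The natural approach is the \emph{window} technique pioneered in the physics work of Herbst--Hori--Page and developed mathematically by Segal, Ballard--Favero--Katzarkov, and others for GIT wall-crossings. The plan is to produce, for each integer $k$, a full triangulated subcategory $\window{k} \subset \D(\mfX^{(r)})$ such that the restriction functors
$$i_\pm^\ast : \window{k} \To \D(X_\pm^{(r)})$$
are \emph{both} equivalences. Given this, we simply set $\psi_k := i_-^\ast \circ (i_+^\ast)^{-1}$, which is automatically an equivalence of triangulated categories.

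The window should be spanned by equivariant vector bundles built from Schur functors $\Sigma^\lambda S^{(r)}$ whose highest weights $\lambda$ fit inside an $r \times (d-r)$ box, then twisted globally by $(\det S^{(r)})^{\otimes k}$. Concretely, $\windowBundles{k}$ is a carefully chosen finite collection of equivariant bundles on the stack $\mfX^{(r)}$ whose $\operatorname{GL}(S^{(r)})$-weights lie in a strip of width $d$, shifted by $k$. The upshot is that for bundles with weights in this strip, one gets just enough weights to generate, but not so many that cohomology spills out over the unstable locus.

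To verify the equivalence, first I would prove \emph{fully faithfulness}, i.e.\ that for window generators $W_1, W_2$ the natural map
$$\RHom_{\mfX^{(r)}}(W_1, W_2) \To \RHom_{X_\pm^{(r)}}(i_\pm^\ast W_1, i_\pm^\ast W_2)$$
is a quasi-isomorphism. The unstable locus $Z_- \subset \mfX^{(r)}$ (where the map $S^{(r)} \to V$ fails to be injective) is a Kirwan--Ness stratum fibering over a lower-dimensional Grassmannian-type stratum, with a well-understood normal direction carrying specific $\operatorname{GL}(S^{(r)})$-weights. Resolving along this stratum by a Koszul complex and then using Borel--Weil--Bott for the fiberwise weights, one checks that the difference between $\mfX^{(r)}$-cohomology and $X_+^{(r)}$-cohomology is governed by weights that \emph{cannot} appear for window objects by construction; symmetrically for $X_-^{(r)}$. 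The second task is \emph{essential surjectivity}: to show the images of the window generators generate $\D(X_\pm^{(r)})$. This reduces to exhibiting every tautological Schur bundle $\Sigma^\mu S^{(r)}|_{X_\pm^{(r)}}$ with $\mu$ outside the window as a complex of window restrictions, which is done using the \emph{staircase} or \emph{twisted Lascoux} complexes promised in the abstract: these are natural long exact sequences of Schur bundles on Grassmannians that let one trade a bundle outside the window for complexes built from bundles inside.

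The main obstacle is the cohomological vanishing underlying fully faithfulness. The two unstable strata $Z_\pm$ are not smooth subvarieties of $\mfX^{(r)}$ but themselves carry rank stratifications, so a single Koszul resolution will not suffice; one must iterate the argument stratum by stratum, tracking weights carefully through the Borel--Weil--Bott calculation. The staircase complexes are what make this bookkeeping tractable, and obtaining a clean, compact description of them (as advertised in the introduction) is the technical heart of the argument. Once fully faithfulness and generation are established, composing the two restriction equivalences yields the desired family $\{\psi_k\}_{k \in \Z}$, with the integer $k$ recording the ``shift'' of the window that was chosen.
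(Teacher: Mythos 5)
Your architecture is exactly the paper's: define the window $\cW_k \subset \D(\mfX^{(r)})$ split-generated by the bundles $S^{\vee \delta} \otimes \det(S^\vee)^{\otimes k}$ with $\delta$ in the $r \times (d-r)$ box, prove that both restrictions $\LDerived i^*_\pm : \cW_k \to \D(X^{(r)}_\pm)$ are equivalences, and set $\psi_k = \LDerived i^*_- \circ (\LDerived i^*_+)^{-1}$. Where you genuinely diverge is in the proof of the key lemma (Lemma~\ref{lemma.splitting} here, \cite[Proposition 3.6]{donovan:2012wj}). The paper gets it from a \emph{tilting bundle}: the generators are chosen to match Kapranov's exceptional collection on $\Gr(r,V)$, their restrictions to $X^{(r)}_\pm$ sum to a tilting bundle (generation is essentially free because the projection $X^{(r)}_\pm \to \Gr$ is affine, and the $\Ext$-vanishing is a Borel--Weil--Bott computation), and the endomorphism algebras on $X_+$, $X_-$ and the stack are identified. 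You instead propose the Kirwan--Ness stratification and weight-truncation argument of Halpern-Leistner and Ballard--Favero--Katzarkov --- resolve along the unstable strata and check that the correction terms carry weights excluded by the grade restriction rule. Both routes work; the paper itself notes that the $\psi_k$ fit into the HL/BFK framework, and you correctly identify the delicate point of that version, namely that for $r>1$ the unstable locus is itself rank-stratified so the Koszul/Bott bookkeeping must be iterated. The tilting approach sidesteps that bookkeeping at the price of being tailored to this class of examples, while yours is the one that generalises to arbitrary GIT wall-crossings. One misallocation of effort worth flagging: the staircase (twisted Lascoux) complexes are not needed for essential surjectivity, since Kapranov's collection already generates $\D(\Gr(r,V))$ and hence, by affineness, $\D(X^{(r)}_\pm)$; in the paper those complexes are reserved for comparing \emph{different} windows, i.e.\ for Theorem~\ref{theorem.twist_cotwist} rather than Theorem~\ref{keytheorem.equivalences}.
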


We consider now \emphasis{autoequivalences} produced by composing the equivalences $\psi_k$ and their inverses:

\begin{notn}We write $\omega_{k,l} := \psi_k^{-1}\psi^{\phantom{-1}}_l \!\!\!\! \in \Aut\left(\D(X^{(r)}_+)\right),$ and refer to these $\omega_{k,l}$ as {\em window-shift autoequivalences}.
\end{notn}

There is a well-developed theory associating twist autoequivalences of derived categories to \emphasis{spherical objects} \cite{Seidel:2000} or, more generally, to \emphasis{spherical functors} (for definitions, see Section~\ref{section.spherical}). These twists may be thought of as mirror to symplectic monodromies \cite{Seidel:2000,Thomas:2010tg}. The window-shift autoequivalences defined above can, at least heuristically, be seen as realising monodromy in the Stringy K\"ahler Moduli Space (SKMS) of our theory. Accordingly, the following result relates certain window-shift autoequivalences to twists of spherical functors. Other window-shifts are related to a dual notion of \emphasis{cotwists}:

\begin{keythm}[ {\cite[Theorems 3.12, 3.13]{donovan:2012wj}} ]\label{theorem.twist_cotwist} We have that \begin{align*}\omega_{0,+1} & = T_{F(r)}, \\
\omega_{0,-1}  & = C_{F(r+1)} [2(d-r)-1],\end{align*}
where the twist $T_{\bullet}$ and the cotwist $C_{\bullet}$ are defined in Section~\ref{section.spherical}, and each $F(r)$ is a certain spherical functor $$F(r): \D(X_+^{(r-1)}) \To \D(X_+^{(r)})$$ defined in Section~\ref{section.correspondences}.
\end{keythm}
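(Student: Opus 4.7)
My plan is to realize both sides of the claimed identity as Fourier--Mukai transforms on $X_+^{(r)}\times X_+^{(r)}$ and to match their kernels directly. The equivalence $\psi_k$ is defined by restriction of a window $\windowLetter^k\subset\D(\mfX^{(r)})$, so $\omega_{0,+1}=\psi_0^{-1}\psi_1$ is computed as follows: given $\mcE\in\D(X_+^{(r)})$, first lift it to the unique object $\widetilde{\mcE}\in\windowLetter^1$ restricting to $\mcE$, then rewrite $\widetilde{\mcE}$ as an object of $\D(\mfX^{(r)})$ supported in $\windowLetter^0$, and finally restrict back to $X_+^{(r)}$. The two windows $\windowLetter^0$ and $\windowLetter^1$ share most of their weights and differ only at the top weight present in $\windowLetter^1$. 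My first step will therefore be to write down explicitly the staircase resolution that replaces this top-weight bundle by bundles whose weights lie in $\windowLetter^0$: the failure of the lift to already sit inside $\windowLetter^0$ is exactly what produces the nontrivial autoequivalence.

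Next I will construct the spherical functor $F(r)$ using the partial flag correspondence lifted to the total spaces of the Grassmannian bundles. Concretely, there is an incidence subvariety $Z\hookrightarrow X_+^{(r-1)}\times X_+^{(r)}$ corresponding on the base to $\text{Fl}(r{-}1,r,V)$, with projections $q:Z\to X_+^{(r-1)}$ and $p:Z\to X_+^{(r)}$, and I take $F(r):=p_*q^*$, possibly twisted by a suitable line bundle to obtain the correct weights. The right adjoint $G(r)$ is computed via Grothendieck--Serre duality on $Z$, and the spherical conditions I would verify by reducing along the affine-bundle projections to the Grassmannian setting, where Borel--Bott--Weil supplies the vanishing and unit/counit computations required to show the cone and cocone of the adjunctions are equivalences.

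The heart of the argument is then the comparison. The twist $T_{F(r)}=\Cone(F(r)G(r)\to\id)$ has a Fourier--Mukai kernel given by the cone on the adjunction counit, supported on the diagonal together with the image of $Z$. On the other hand, the kernel of $\omega_{0,+1}$ is determined by the staircase resolution of the extremal weight bundle of $\windowLetter^1$ by weights lying in $\windowLetter^0$: if the compact form of the staircase complex is set up correctly, its terms are precisely the pushforwards from $Z$ of the corresponding tautological bundles, so the two kernels can be matched term by term after unwinding the adjunction map. The cotwist formula $\omega_{0,-1}=C_{F(r+1)}[2(d-r)-1]$ is then obtained by running the same argument with $k=-1$ in place of $k=+1$: the roles of left and right adjoints swap, producing a cocone rather than a cone, and the shift $[2(d-r)-1]$ arises from the relative dualizing sheaf of the incidence variety associated to $F(r+1)$ via Serre duality.

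The hard part will be the explicit identification of the staircase complex with the Fourier--Mukai kernel of the spherical twist: this is a concrete but intricate homological computation that requires both the compact form of the staircase complex and a careful analysis of the adjunction counit $F(r)G(r)\to\id$ to see that the differentials match. A second, more arithmetic difficulty is tracking the shift $[2(d-r)-1]$ in the cotwist: getting this right requires careful bookkeeping of the dimensions of the partial flag fiber and of the canonical bundle contributions from the vector-bundle parts of the total spaces, and this is where numerical errors are most likely to arise.
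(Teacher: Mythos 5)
Your ingredients are the right ones --- the windows, the staircase (twisted Lascoux) complexes, and the Hecke correspondence between $X_+^{(r-1)}$ and $X_+^{(r)}$ all appear in the paper's proof --- but the step where you propose to match Fourier--Mukai kernels on $X_+^{(r)}\times X_+^{(r)}$ is where the argument as written breaks down. The difficulty is that $\omega_{0,+1}=\psi_0^{-1}\psi_{+1}$ is defined as a composite of equivalences, and the staircase resolutions only tell you its value on the split-generators $\mcE\in W_{+1}$ of the window; a functor (equivalently, its kernel) is not determined by its values on a generating set of objects, so the assertion that ``the kernel of $\omega_{0,+1}$ is determined by the staircase resolution of the extremal weight bundle'' is not yet a proof. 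To pin down the kernel you would also have to control the induced maps on $\Hom$-algebras between generators and compare them with the differentials coming from the counit $F(r)G(r)\to\id$; this is exactly the ``intricate homological computation'' you defer, and it is the essential content, not a technicality.

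The paper circumvents this by never computing the kernel of $\omega_{0,+1}$ at all. Everything is lifted to the Artin stack $\mfX^{(r)}$: the correspondence $\mfZ$ gives a functor $\mathcal{F}(r)$ on $\D(\mfX^{(r)})$ whose twist $T_{\mathcal{F}}$ is shown to be a \emph{transfer functor} in the sense of Definition~\ref{defn.transfer_functor}, i.e.\ it carries $\cW_{+1}$ into $\cW_0$ and intertwines the two restrictions $\LDerived i^*_{\pm}$, restricting to $T_{F(r)}$ on $X_+$ and to the identity on $X_-$ (the latter by a support argument). The only computation required is the object-level statement that $\mathcal{F}\mathcal{R}(\mcE)$, for a generator $\mcE\in W_{+1}$, is either zero or a complex $\{\mcE\to\ldots\}$ whose tail lies in $W_0$; this is supplied by Corollary~\ref{cor.sequences}, and then Proposition~\ref{prop.twist_transfer_functor} together with the formal transfer-functor proposition yields equality of \emph{functors}. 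If you want to salvage your strategy, replace the kernel comparison by this intertwining argument on the ambient stack; your account of the cotwist and of the shift $[2(d-r)-1]$ (which does indeed come from the relative dualizing complex, using the Calabi--Yau property) can then be run the same way, with the caveat that the paper flags this case as more delicate.
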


\begin{remark}\phantom{a}
	
\begin{enumerate}
\item When $r=1$, the functor $F(1)$ has domain $\D(\pt)$ and we have $F(1)(-) = - \otimes \cO_{\PP V}.$ Here $\cO_{\PP V}$ is the skyscraper sheaf on the zero section of the bundle $X^{(1)}_+$, where we use the description of $X^{(1)}_+$ given in Remark~\ref{remark.geometry_of_quotients}. We then have that $T_{F(1)} = T_{\cO_{\PP V}}$, the spherical twist about the spherical object $\cO_{\PP V}$ \cite[Lemma 2.2]{Donovan:2011ua}.
\item For $r \geq 2$, the twist $T_{F(r)}$ has a more complicated geometric description. Generically, it acts as a family spherical twist \cite{Horja:2001}, but the behaviour on a certain closed locus is more elaborate. In the case $r=2$, this interesting locus is the zero section $\Gr(2,V)$ of $X_+^{(2)}$. See \cite{Donovan:2011vc} for discussion.
\end{enumerate}
\end{remark}

Finally note that we obtain a description of all window-shift equivalences $\omega_{k,l}$ in our examples as a corollary of Theorem~\ref{theorem.twist_cotwist}: see Section~\ref{section.window-shift_relations} for details.

\begin{figure}
\begin{tikzcd}[column sep=150pt, row sep=100pt, scale=3, every node/.style={scale=8}]
X_+^{(0)} \dar[swap]{F(1)} & \\
X_+^{(1)} \dar[swap]{F(2)}
\arrow[out=180-40, in=180-10, loop]{}[name=TF1,swap]{T_{F(1)}}
\arrow[out=40, in=10, loop]{}[name=w01-1]{\omega_{0,+1}}
\arrow[out=-40, in=-10, loop]{}[name=w0-1-1,swap]{\omega_{0,-1}}
\arrow[out=180+40, in=180+10, loop]{}[name=CF2]{C_{F(2)}}
\arrow[bend left=50]{r}[description]{\Phi_{+1}}
\arrow{r}[description]{\Phi_0}
\arrow[bend right=50]{r}[description]{\Phi_{-1}}
\arrow[-, dotted, to path={(TF1) ..controls +(0.5,0.1) and +(-0.5,0.15).. (w01-1)}]{}
\arrow[-, dotted, to path={(CF2) ..controls +(0.5,-0.1) and +(-0.5,-0.13).. (w0-1-1)}]{}
	& X_-^{(1)} \\ 
X_+^{(2)} \dar[swap]{F(3)}
\arrow[out=180-40, in=180-10, loop]{}[name=TF2,swap]{T_{F(2)}}
\arrow[out=40, in=10, loop]{}[name=w01-2]{\omega_{0,+1}}
\arrow[out=-40, in=-10, loop]{}[name=w0-1-2,swap]{\omega_{0,-1}}
\arrow[out=180+40, in=180+10, loop]{}[name=CF3]{C_{F(3)}}
\arrow[bend left=50]{r}[description]{\Phi_{+1}}
\arrow{r}[description]{\Phi_0}
\arrow[bend right=50]{r}[description]{\Phi_{-1}}
\arrow[-, dotted, to path={(TF2) ..controls +(0.5,0.1) and +(-0.5,0.15).. (w01-2)}]{}
\arrow[-, dotted, to path={(CF3) ..controls +(0.5,-0.1) and +(-0.5,-0.13).. (w0-1-2)}]{}
	& X_-^{(2)} \\
X_+^{(3)}
 & {\vdots}
\end{tikzcd}
\caption{Schematic of the functors involved in Theorem~\ref{theorem.twist_cotwist}, with arrows denoting functors between the derived categories of the respective spaces $X_{\pm}^{(r)}$. Autoequivalences of the $\D(X_+^{(r)})$ which coincide (up to a shift) are connected by dotted lines.}
\label{figure.schematic}
\end{figure}
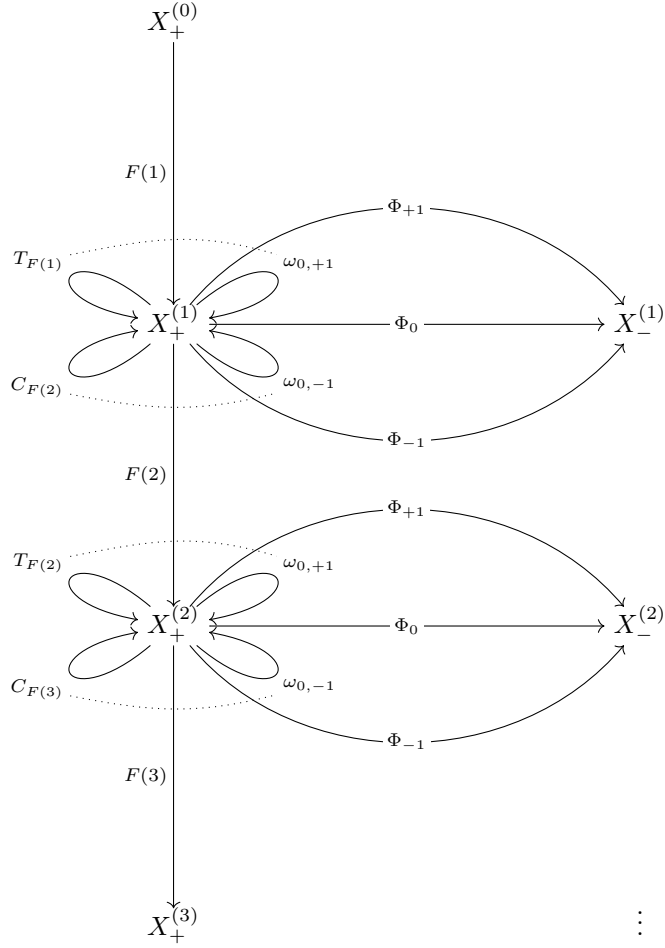

\newpage
\subsection{Outline}
\begin{itemize}
\item In Section~\ref{section.background}, we explain some background, and give a brief discussion of the physical interpretation of our results.
\item We construct our windows in Section~\ref{section.windows}, and indicate how they yield derived equivalences between GIT quotients (Theorem~\ref{keytheorem.equivalences}). 
\item In Section~\ref{section.twist_Lascoux}, we give a compact guide  to the \emphasis{twisted Lascoux resolutions} which are used in our proofs to produce long exact sequences of bundles on Grassmannians: these may also be of independent interest.
\item In Section~\ref{section.windows_and_twists}, we explain the links between window-shift autoequivalences and twists (Theorem~\ref{theorem.twist_cotwist}).
\item In Appendix~\ref{section.window_shift_examples}, we give explicit examples of the actions of window-shift autoequivalences.
\end{itemize}

\begin{acks} This note is based on a joint paper with Ed Segal \cite{donovan:2012wj}, which grew out of my thesis project suggested by Richard Thomas: their ideas and influence appear throughout, and this work would not have been possible without their generous support. I~also gratefully acknowledge the financial support of EPSRC during my doctoral work, and subsequently via grant EP/G007632 held in Edinburgh with Iain Gordon. Finally, I wish to thank the organisers of String-Math 2012 for an enjoyable and stimulating conference.\end{acks}

\begin{notn} We write:
\begin{itemize}
\item $\PP V$ for the projective space of lines in a vector space $V$, and $\PP^\vee V$ for the dual space of $1$-dimensional quotients;
\item $\Gr(r,V)$ for the Grassmannian of $r$-dimensional subspaces of $V$, and $\Gr(V,r)$ for the dual Grassmannian of $r$-dimensional quotients;
\item $\D(X)$ for the bounded derived category of coherent sheaves on a variety $X$;
\item $\delta$ for a Young diagram, or  the corresponding sequence of integers (Definition~\ref{defn.Young_diagrams});
\item $U^\delta$ for a Schur power of the vector space, or vector bundle, $U$ (Definition~\ref{defn.Schur_powers});
\item $\Hom_{\into}$ and $\Hom_{\onto}$ for injective and surjective maps respectively.
\end{itemize}
\end{notn}

\section{Background}
\label{section.background}
\subsection{Spherical functors}
\label{section.spherical}

Following \cite{Anno:2007wo, Anno:2010we} we have:

\begin{defn}\label{definition.spherical} Taking an integral functor $F : \D(Z) \to \D(X)$ with right adjoint $R$ we have: \begin{enumerate} \item a \textbf{twist} functor $T_F : \D(X) \to \D(X)$ defined so that 
$$ T_F (\mcE) := \cone{  FR(\mcE) \To \mcE  };$$
\item a \textbf{cotwist} functor $C_F: \D(Z) \to \D(Z)$ such that
$$ C_F (\mcE) := \cone{  \mcE \To RF(\mcE)  }.$$
\end{enumerate}
(The morphisms here are provided by the (co)unit of the adjunction.)
\end{defn}

In all our examples, $X$ and $Z$ are Calabi-Yau (see \cite[Section 3.2]{Donovan:2011ua}). Under this assumption, $T_F$ is an equivalence precisely when $C_F$ is an equivalence. In this latter circumstance, we refer to $F$ as \emphasis{spherical}. Note that there exists a more general definition which relaxes the Calabi-Yau assumptions: see \cite{Anno:2010we}.

\subsection{Related work concerning GIT}

Halpern-Leistner \cite{HalpernLeistner:2012uha} and Ballard, Favero, and Katzarkov \cite{Ballard:2012wia}, also employing ideas from \cite{Herbst:2008wl} and \cite{Segal:2009tua}, have developed a general theory of derived equivalences corresponding to certain variations of GIT: our equivalences $\psi_k$ fit in this framework. An upcoming paper \cite{HalpernLeistner:2013tp} relates window-shift autoequivalences for general variations of GIT involving a single Hesselink stratum to twists of spherical functors. Our results for $r=1$ follow from this general theory, so it is natural to ask whether the theory can be extended to cover the cases $r>1$. We hope that this will be the subject of future work.

\subsection{Links with gauged linear $\sigma$-models}
\label{section.physics}

As has been noted, the quotients $X^{(r)}_{\pm}$ should be thought of as corresponding to certain large-radius limits of a gauged linear $\sigma$-model determined by $\mfX^{(r)}$, defined from the data of the underlying vector space of $\mfX^{(r)}$ with its $\operatorname{GL}(r)$-action: see \cite{donovan:2012wj} for a more detailed account.

In physical language, our model has a $\operatorname{U}(r)$ gauge group coupled to $d$ chiral multiplets $\Phi_i$ transforming in the fundamental representation $\mathbf{r}$, with another $d$ fields transforming in the anti-fundamental representation $\bar{\mathbf{r}}$ (cf. \cite[Section 2.1]{Hori:2006hv}): these latter fields ensure that the model is Calabi-Yau. Setting $d=2$ and $r=1$, we recover the brane transport analysis of \cite[Section 8.4.1]{Herbst:2008wl}. Note that each of our spherical functors $F(r)$ relates the categories of B-branes associated to a pair of GLSMs, with gauge groups $\operatorname{U}(r)$ and $\operatorname{U}(r+1)$ respectively.

There has been some discussion \cite{Aspinwall:2002tu,Herbst:2009wi,HalpernLeistner:2013tp} of how certain SKMS monodromies give rise to family spherical twists, or `$EZ$ twists' \cite{Horja:2001}. These twists may be described in the framework of Section~\ref{section.spherical}: the associated cotwist is given by a composition of a shift, and tensoring by a line bundle. In our examples, by contrast, the cotwist is no longer of this form: for $r=2$, it is itself a twist about a spherical object, whereas for $r>2$ it is even more complicated \cite{donovan:2012wj}. It would be interesting to investigate how these exotic twists arise from monodromy. Furthermore, Theorem~\ref{theorem.twist_cotwist} suggests that, in our examples, SKMS monodromies yield both a twist and a \emphasis{cotwist}: we do not know how this should be interpreted physically.

\section{Windows}
\label{section.windows}
\subsection{Schur powers}
\begin{notn}We write $S^{(r)}$ for the tautological rank $r$ bundle on the stack $\mfX^{(r)}$, or simply $S$ when the $r$ is clear from context. 
\end{notn}

We describe how to obtain various other natural bundles on $\mfX^{(r)}$ by the \emphasis{Schur power} construction. These bundles will be used as generators for our windows. They are indexed by arrangements of boxes known as \emphasis{Young diagrams}:

\begin{defn}[Young diagrams]\label{defn.Young_diagrams} Given a finite non-decreasing sequence $\delta = ( \delta_1, \delta_2, \ldots, \delta_h) $ of non-negative integers $\delta_i$ there is a corresponding Young diagram. This is given by a stacked arrangement of boxes, with the sequence $\delta$ indicating the number of boxes contained in each successive layer (see Example~\ref{eg.young_diag} below).
\end{defn}

\begin{notn}We write $\row{i}{\delta}$ and $\col{i}{\delta}$ for the length of the $i^{\text{th}}$ row and $i^{\text{th}}$ column of the diagram $\delta$, respectively.\end{notn}

\begin{figure}[H]
\begin{center}\begin{tikzpicture}[auto,>=latex',scale=0.55]
\PlainYoungDiag{delta}{0}{0}{3.5}{2.5}{{3,1}}
\end{tikzpicture}\end{center}
\caption{Young diagram $\delta=(3,1)$.}
\label{figure.young_diagram_example}
\end{figure}
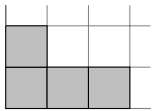

\begin{eg}\label{eg.young_diag} The sequence $\delta=(3,1)$ corresponds to the Young diagram in Figure~\ref{figure.young_diagram_example} above. By definition, we have $\row{i}{\delta}=\delta_i$. Observe also that $$\col{i}{\delta}=\begin{cases} 2 & i=1, \\ 1 & i=2,3, \\ 0 & i \geq 4. \end{cases}$$
\end{eg}

\begin{defn}[Schur powers]\label{defn.Schur_powers} Given a vector space $U$, we write $ U^\delta$ for the irreducible $\operatorname{GL}(U)$-representation with highest weight $\delta$ \cite{Fulton:1996tk}.
\end{defn}

\begin{eg}We have that \begin{equation*}\begin{gathered} U^{(1,\ldots,1)} \iso \wedge^h U, \\ U^{(w)} \iso \Sym^w U. \end{gathered}\end{equation*} More general Schur powers have a more complicated description.
\end{eg}

\begin{remark} Applying the same Schur power construction relative to a base, we may take Schur powers of \emphasis{vector bundles} also. In particular, we obtain natural bundles $S^{\vee \delta}$ on the stacks $\mfX^{(r)}$, which we use in the following Section \ref{section.window_def}.\end{remark}

\subsection{Equivalences}
\label{section.window_def}
We may now construct our windows:
\begin{defn}[windows]We define the window $\cW_k$ for $k \in \Z$ to be the full subcategory of $\D(\mfX^{(r)})$ split-generated by the following set of vector bundles:

$$W_k= \setconds{ S^{\vee \delta} \otimes \det(S^\vee)^{\otimes k} }{ \row{\bullet}{\delta} \leq d-r, \: \col{\bullet}{\delta} \leq r }.$$

\end{defn}

\begin{remark}We choose the same representations  as in Kapranov's exceptional collection for the Grassmannian \cite{Kapranov:2009wf}. As a consequence we find \cite[Appendix C]{Donovan:2011ua} that the restrictions of the bundles $W_k$, to either $X_+$ or $X_-$, sum to give a \emphasis{tilting bundle}. This is the crucial point in the proof of the following lemma, which we omit:
\end{remark}

\begin{lemma}[ {\cite[Proposition 3.6]{donovan:2012wj}} ]\label{lemma.splitting} Writing $i_{\pm}$ for the inclusions $X_\pm \into \mfX$, the (derived) restriction functors $\LDerived i^*_{\pm }$ give equivalences $$\LDerived i^*_{\pm } : \cW_k \overset{\sim}{\To} \D(X_\pm).$$ \end{lemma}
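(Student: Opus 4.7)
The plan is to establish essential surjectivity and fully faithfulness separately, exploiting that $i_\pm$ is an open immersion so $\LDerived i_\pm^* = i_\pm^*$ is exact and preserves split-generation. Essential surjectivity is the easier half: by the remark preceding the lemma, $\mcT_\pm := \bigoplus_{A \in W_k} i_\pm^* A$ is a tilting bundle on $X_\pm$, hence split-generates $\D(X_\pm)$, and so the essential image of $\cW_k$ under $i_\pm^*$ is all of $\D(X_\pm)$.

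Fully faithfulness reduces to showing that for any $A, B \in W_k$ the natural restriction map
\[
\Ext^n_{\mfX}(A,B) \longrightarrow \Ext^n_{X_\pm}(i_\pm^* A,\, i_\pm^* B)
\]
is an isomorphism for every $n$. Since $A,B \in W_k$ share the twist $\det(S^\vee)^{\otimes k}$, these factors cancel in $A^\vee \otimes B$, leaving an equivariant Schur bundle on $\mfX$ whose Littlewood--Richardson decomposition is controlled by the window constraints. Using the open--closed recollement for $Z_\pm := \mfX \setminus X_\pm \hookrightarrow \mfX$, the required isomorphism is equivalent to the vanishing of the local cohomology $\RDerived\Gamma_{Z_\pm}(\mfX, A^\vee \otimes B)$. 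I would compute this by resolving $\cO_{Z_\pm}$ via the twisted Lascoux complex of Section~\ref{section.twist_Lascoux}, whose terms are Schur bundles of controlled shape, and then evaluate the cohomology of each resolution term against $A^\vee \otimes B$ as a $\operatorname{GL}(r)$-invariants computation via Borel--Weil--Bott on $\mfX = [\operatorname{affine}/\operatorname{GL}(r)]$.

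The main obstacle is the combinatorial cohomology vanishing: after decomposing $A^\vee \otimes B$ into Schur summands, one must check term-by-term that every Schur bundle appearing in the Lascoux resolution of $\cO_{Z_\pm}$, tensored with each summand, contributes no $\operatorname{GL}(r)$-invariants. This is precisely the role of the grade-restriction rules of Herbst--Hori--Page: the bounds $\row{\bullet}{\delta} \leq d-r$ and $\col{\bullet}{\delta} \leq r$ defining $W_k$ are tuned so that the Lascoux contributions land in the forbidden weight window of Borel--Weil--Bott, forcing all these terms to cancel and leaving only the global sections on $X_\pm$. Verifying this shape-by-shape match between Littlewood--Richardson summands and Lascoux terms is the technical heart of the argument and the reason the proof is deferred to \cite{donovan:2012wj}.
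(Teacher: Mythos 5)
Your overall strategy is workable but it is not the paper's. The paper argues via tilting: the restrictions of the generators $W_k$ sum to a tilting bundle on $X_\pm$ (the Kapranov-type input from \cite[Appendix C]{Donovan:2011ua}, flagged in the remark before the lemma as the crucial point), which simultaneously gives generation of $\D(X_\pm)$ and the vanishing $\Ext^{>0}_{X_\pm}(i_\pm^*A,i_\pm^*B)=0$; since higher $\Ext$s on $\mfX$ vanish automatically (affine atlas, reductive group), the only remaining check is that the degree-zero $\Hom$s agree under restriction, an invariant-theory/codimension statement, after which both sides are identified with perfect modules over a common endomorphism algebra. Your reduction of full faithfulness to the vanishing of $\RDerived\Gamma_{Z_\pm}(\mfX,A^\vee\otimes B)$ is instead the quantization-style route of \cite{HalpernLeistner:2012uha,Ballard:2012wia}; the reduction itself is correct, since the exact triangle relating $\RDerived\Gamma_{Z_\pm}$, $\RDerived\Gamma(\mfX,-)$ and $\RDerived\Gamma(X_\pm,-)$ shows that vanishing of the first term is exactly equivalent to restriction being an isomorphism on all $\Ext$s, and your essential-surjectivity half matches the paper's use of the tilting property.

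The gap is in how you propose to prove that vanishing. First, the twisted Lascoux complexes of Section~\ref{section.twist_Lascoux} do not resolve $\cO_{Z_\pm}$: they resolve the pushforwards $j_*\left((S^{(r-1)\vee})^{\delta}\right)$ under the Hecke-correspondence map into the stack $\mfT=\quotStack{\Hom(V,S^{(r)})}{\operatorname{GL}(S^{(r)})}$, which is a different stack from $\mfX^{(r)}$ and a different class of sheaves; in this note they enter only in the proof of Proposition~\ref{prop.twist_transfer_functor}, not of Lemma~\ref{lemma.splitting}. Second, even granted a locally free resolution of $\cO_{Z_\pm}$, local cohomology is not $\RDerived\Hom(\cO_{Z_\pm},-)$: one needs a colimit over infinitesimal thickenings, or the Cousin complex attached to the Kirwan--Ness stratification of the unstable locus together with a weight analysis of $\Sym^\bullet$ of the (co)normal bundles of the strata. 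For $r=1$ the unstable locus is a single smooth stratum and your weight-bound argument goes through essentially as stated; for $r>1$ it is a singular determinantal locus with several strata, and matching the weights of the conormal data of each stratum against the Littlewood--Richardson constituents of $A^\vee\otimes B$ is precisely the nontrivial content that your sketch defers rather than supplies. As written, the full-faithfulness half is only established for $r=1$.
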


Using this lemma we have a commutative diagram as follows, which defines the required equivalences $\psi_k$:
$$\begin{tikzcd}[column sep=30pt, row sep=0pt]
& \D(\mfX) \arrow{ddl}[swap]{\LDerived i^*_+} \arrow{ddr}{\LDerived i^*_-}& \\
& \cup & \\
  \D(X_+) \arrow[bend right=30]{rr}{\sim}[swap]{\psi_k} & \mcW_k \lar[swap]{\sim} \rar{\sim} & \D(X_-)
\end{tikzcd}$$

\begin{remark}The equivalences $\psi_k$ are obtained in \cite[Section 5]{Buchweitz:2011ug} using a different method, which is characteristic-free.\end{remark}

\subsection{Examples}

It is an exercise \cite{Fulton:1996tk} that if we take $\delta$ of length $r$ then in fact $$S^{\vee \delta} \otimes \det(S^\vee) \iso  S^{\vee \delta'}$$ where $\delta'$ is obtained by incrementing each element of the integer sequence $\delta$. We deduce that windows $\cW_k$ with consecutive $k$ have a substantial overlap. Examples are as follows:

\newcommand\lineNodes[7]{
\def\factor{#1}
\def\xshift{#2}
\def\padding{1/2 * \factor}
\node ({#3}Schur3) at (-6 * \factor + \xshift, 0) {#4};
\node ({#3}Schur2) at (-4 * \factor + \xshift, 0) {#5};
\node ({#3}Schur1) at (-2 * \factor + \xshift,0) {#6};
\node ({#3}Schur0) at (0 + \xshift,0) {#7};
\draw[gray] (0 + \xshift + 3/2*\padding ,\padding) -- (-6 * \factor + \xshift - 3/2*\padding , \padding) -- (-6 * \factor + \xshift - 3/2*\padding , -\padding) -- (0 + \xshift + 3/2*\padding ,-\padding) -- cycle;
}

\newcommand\twoLineNodes[4]{
\def\factor{#1}
\def\xshift{#2}
\node ({#3}Schur4) at (-8 * \factor + \xshift, 0) {#4};
\draw[gray] (-2 + \xshift + 3/2*\padding ,\padding) -- (-8 * \factor + \xshift - 3/2*\padding , \padding) -- (-8 * \factor + \xshift - 3/2*\padding , -\padding) -- (-2 + \xshift + 3/2*\padding ,-\padding) -- cycle;}

\newcommand\twoLineWindowNodes[1]{
\lineNodes{#1}{0}{window}{$S^{\vee (d-1)}$}{$\ldots$}{$S^{\vee (1)}$}{$S^{\vee (0)}$}
\twoLineNodes{#1}{0}{window}{$S^{\vee (d)}$}
}

\newcommand\triangleNodes[9]{
\def\factor{#1}
\def\xshift{#2}
\def\padding{2/3 * \factor}
\node ({#3}Schur22) at (-4 * \factor + \xshift, 0) {#4};
\node (above{#3}Schur22) at (-4 * \factor + \xshift, 1) {};
\node ({#3}Schur11) at (-2 * \factor + \xshift, 0) {#5};
\node ({#3}Schur00) at (0 + \xshift,0) {#6};
\node (above{#3}Schur00) at (0 + \xshift, 1) {};
\node ({#3}Schur21) at (-3 * \factor + \xshift, 1 * \factor) {#7};
\node ({#3}Schur10) at (-1 * \factor + \xshift, 1 * \factor) {#8};
\node ({#3}Schur20) at (-2 * \factor + \xshift, 2 * \factor) {#9};
\draw[gray] (0 + \xshift +2*\padding,-2/3*\padding) -- (-4 * \factor + \xshift -2*\padding, -2/3*\padding) -- (-2 * \factor + \xshift, 2 * \factor + 4/3*\padding) -- cycle;
}

\newcommand\twoTriangleNodes[6]{
\def\factor{#1}
\def\xshift{#2}
\node ({#3}Schur33) at (-6 * \factor + \xshift, 0) {#4};
\node ({#3}Schur32) at (-5 * \factor + \xshift, 1 * \factor) {#5};
\node ({#3}Schur31) at (-4 * \factor + \xshift, 2 * \factor) {#6};
\draw[gray] (-2 * \factor + \xshift+2*\padding,-2/3*\padding) -- (-6 * \factor + \xshift-2*\padding,-2/3*\padding) -- (-4 * \factor + \xshift, 2 * \factor + 4/3*\padding) -- cycle;
}

\newcommand\twoWindowNodes[1]{
\triangleNodes{#1}{0}{window}{$S^{\vee (2,2)}$}{$S^{\vee (1,1)}$}{$S^{\vee (0,0)}$}{$S^{\vee (2,1)}$}{$S^{\vee (1,0)}$}{$S^{\vee (2,0)}$}
\twoTriangleNodes{#1}{0}{window}{$S^{\vee (3,3)}$}{$S^{\vee (3,2)}$}{$S^{\vee (3,1)}$}
}

\begin{eg}
\label{eg.P_windows}
We illustrate the windows $\cW_{+1}$ and $\cW_0$ for $r=1$:

\begin{figure}[H]
\begin{center}
\begin{tikzpicture}
\twoLineWindowNodes{1}

\node (W1) at (-10,1) {$\cW_{+1}$};
\node (W0) at (2,1) {$\cW_0$};
\draw [thick,->] (W1) -- ({window}Schur4);
\draw [thick,->] (W0) -- ({window}Schur0);

\end{tikzpicture}
\end{center}
\caption{Windows for $r=1$.}
\end{figure}
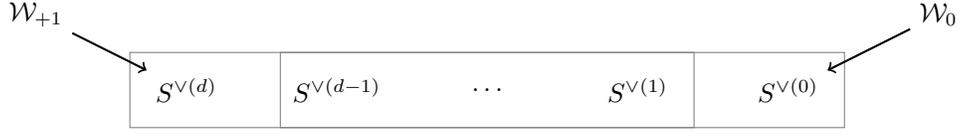

Note that in this case $S=l$, say, a line bundle. We hence have simply that $S^{\vee (d)} = l^{\vee d}$, and so we recover the Beilinson tilting bundles on $X^{(1)}_{\pm}$.
\end{eg}

\begin{eg}We give windows $\cW_{+1}$ and $\cW_0$ for the Grassmannian example $d=4$, $r=2$:

\label{eg.Gr_windows}
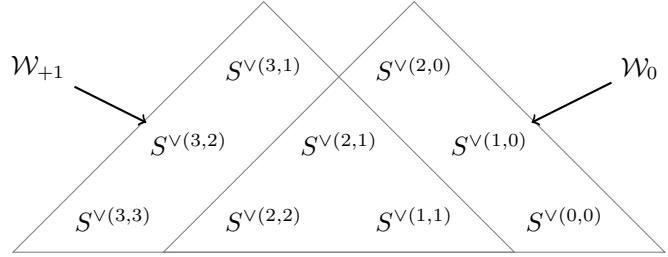
\begin{figure}[H]
\begin{center}
\begin{tikzpicture}
\twoWindowNodes{1}

\node (W1) at (-7,2) {$\cW_{+1}$};
\node (W0) at (1,2) {$\cW_0$};
\draw [thick,->] (W1) -- ({window}Schur32);
\draw [thick,->] (W0) -- ({window}Schur10);

\end{tikzpicture}
\end{center}
\caption{Windows for $d=4$, $r=2$.}
\end{figure}
\end{eg}

\subsection{Relations between window shifts}
\label{section.window-shift_relations}

The following lemma, which follows immediately from the definitions, may be used to express a general window-shift autoequivalence $\omega_{k,l}$ in terms of those described in Theorem~\ref{theorem.twist_cotwist}:

\begin{lemma}
\begin{equation*}\begin{gathered}\omega_{m,k} \circ \omega_{k,l} = \omega_{m,l} \\ \omega_{k+m,l+m} = (- \otimes \det(S^\vee)^{\otimes m})\circ \omega_{k,l} \circ (-\otimes \det(S)^{\otimes m}) \end{gathered}\end{equation*}
\end{lemma}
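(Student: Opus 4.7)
The plan is to unwind the definition $\omega_{k,l} = \psi_k^{-1}\psi_l$ and use compatibility of the restriction functors $\LDerived i_\pm^*$ with tensoring by the line bundle $\det(S^\vee)$. The first identity is just telescoping:
\[
\omega_{m,k}\circ\omega_{k,l} \;=\; \psi_m^{-1}\psi_k\psi_k^{-1}\psi_l \;=\; \psi_m^{-1}\psi_l \;=\; \omega_{m,l},
\]
so I would dispose of it in a single line with no additional input.

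For the second identity, the key observation is that tensoring by the line bundle $\det(S^\vee)^{\otimes m}$ on $\mfX^{(r)}$ sends the generating set $W_k$ of $\cW_k$ bijectively to the generating set $W_{k+m}$ of $\cW_{k+m}$. Hence it defines an equivalence $T_m:\cW_k\overset{\sim}{\to}\cW_{k+m}$ compatible with the splittings of Lemma~\ref{lemma.splitting}: writing $T^{\pm}_m := -\otimes \det(S^\vee)^{\otimes m}$ for the corresponding autoequivalences of $\D(X_\pm)$, the tensor nature of $\LDerived i_\pm^*$ gives
\[
\LDerived i^*_{\pm}\circ T_m \;=\; T^{\pm}_m\circ \LDerived i^*_{\pm}.
\]
Conjugating the defining diagram of $\psi_{k+m}$ by $T_m$ then yields
\[
\psi_{k+m} \;=\; T^-_m\circ\psi_k\circ (T^+_m)^{-1} \;=\; (-\otimes\det(S^\vee)^{\otimes m})\circ \psi_k\circ (-\otimes\det(S)^{\otimes m}).
\]

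To conclude, I take the inverse of this relation for $\psi_{k+m}$, compose with the analogous relation for $\psi_{l+m}$, and note that the two middle copies of $T^-_m$ and $(T^-_m)^{-1}$ cancel, leaving exactly
\[
\omega_{k+m,l+m} \;=\; T^+_m\circ \omega_{k,l}\circ (T^+_m)^{-1},
\]
which is the desired identity. I do not anticipate a genuine obstacle: once the compatibility of the line-bundle twist with the window splitting is in hand, the rest is bookkeeping. The only subtlety worth a sentence of care is verifying that $\det(S^\vee)^{\otimes m}$ really does permute the generating bundles of $\cW_k$, which is immediate from the definition $W_k = \{S^{\vee\delta}\otimes\det(S^\vee)^{\otimes k}\}$.
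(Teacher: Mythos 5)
Your proof is correct and is precisely the unwinding of definitions that the paper has in mind: the paper offers no written proof, stating only that the lemma ``follows immediately from the definitions,'' and your telescoping argument for the first identity together with the observation that $-\otimes\det(S^\vee)^{\otimes m}$ carries the generating set $W_k$ bijectively to $W_{k+m}$ and commutes with $\LDerived i_\pm^*$ is exactly the intended bookkeeping. No gaps.
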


\section{Twisted Lascoux complexes}
\label{section.twist_Lascoux}
We explain a case of a certain generalised Koszul resolution known as the \emphasis{twisted Lascoux resolution} \cite[Section 6.1]{Weyman:2007wu}. Weyman gives this resolution implicitly in {\it loc. cit.}: we perform a Borel--Weil--Bott calculation which makes it explicit in \cite[Theorem A.7]{donovan:2012wj}. We summarise the results here, and give examples.

\begin{remark} A very closely-related construction appears in \cite{Fonarev:2011tq} under the name of \emphasis{staircase complexes}.\end{remark}

We begin by defining a stack $$\mfT := \quotStack{\Hom(V, S^{(r)})}{\operatorname{GL}(S^{(r)})}.$$ 
Note that the dual Grassmannian $\Gr(V,r)$ arises as a substack of $\mfT$ by restricting to the locus of surjective $\Hom$s.  The following Theorem~\ref{theorem.freeresolutions} resolves certain natural torsion sheaves \eqref{equation.resolved_sheaf}, supported on the complement of this locus. Corollary~\ref{cor.sequences} then produces associated exact sequences on the Grassmannian.

We consider then the composition morphism $j$ given as follows
$$ j: \quotStack{\Hom(V, S^{(r-1)}) \spacedoplus \Hom_{\into} (S^{(r-1)}, S^{(r)}) }{\operatorname{GL}(S^{(r-1)}) \spacedtimes \operatorname{GL}(S^{(r)})} \To \mfT, $$
and the direct images of certain Schur powers $(S^{(r-1) \vee})^{\delta}$ under $j$. We have:
\begin{thm}[ { \cite[Theorem A.7]{donovan:2012wj} } ]\label{theorem.freeresolutions}
Let $\delta$ be a Young diagram with \begin{equation}\label{equation.lascoux_shape_restriction} \row{\bullet}{\delta} \leq d-r+1, \quad\col{\bullet}{\delta} < r.\end{equation} Then \begin{equation}\label{equation.resolved_sheaf} j_* \left((S^{(r-1) \vee})^{\delta}\right)\end{equation} may be resolved by a complex $\mcE_\bullet$ of bundles where \begin{align*} \mcE_0 & := (S^{(r) \vee})^{ \delta}, \\ \mcE_k & := (S^{(r) \vee})^{ \delta_k} \otimes \wedge^{s_k} V. \end{align*}
The Young diagrams $\delta_k$ are defined, for $1 \leq k \leq d-r+1$, such that
\begin{equation}\label{equation.diagram_algorithm} \row{i}{\delta_k} = \begin{cases} \row{i}{\delta} & 1 \leq i < \col{k}{\delta}+1 \\k & i = \col{k}{\delta} + 1 \\ \row{i-1}{\delta}+1 & \col{k}{\delta} +1 < i \leq r \end{cases}\end{equation}
and $s_k:=r+k-(\col{k}{\delta}+1)$.
\end{thm}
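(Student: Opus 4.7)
The plan is to exhibit $j$ as a composition of a closed immersion and a projection, then combine a Koszul resolution with a relative Borel--Weil--Bott computation.

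First, identify the source of $j$ with an incidence substack. Let $\pi: \mfT \times \Gr(r-1, S^{(r)}) \to \mfT$ be the relative Grassmannian bundle, carrying the tautological sub- and quotient bundles $T \subset S^{(r)} \twoheadrightarrow Q$ of ranks $r-1$ and $1$, and let $Z \subset \mfT \times \Gr(r-1, S^{(r)})$ be the closed substack where the tautological map $V \to S^{(r)}$ factors through $T$. Quotienting by $\operatorname{GL}(r-1)$ in the definition of the source turns a pair $(f,g)$ with $g$ injective into the data $(\Im(g), g \circ f)$, yielding an isomorphism from the source of $j$ onto $Z$ under which $j$ factors as $\pi \circ \iota$, with $\iota: Z \hookrightarrow \mfT \times \Gr$, and $(S^{(r-1)\vee})^{\delta}$ corresponds to $T^{\vee \delta}$. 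The factorisation condition is the vanishing of the composite $V \to S^{(r)} \twoheadrightarrow Q$, i.e.\ a section of the rank-$d$ bundle $V^\vee \otimes Q$; a dimension count shows $Z$ has the expected codimension $d$, so this section is regular.

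Next, resolve $\iota_* \mcO_Z$ by the associated Koszul complex, tensor with $T^{\vee \delta}$, and use that $Q$ has rank $1$ to write the $k$-th term as $\wedge^k V \otimes (Q^\vee)^{\otimes k} \otimes T^{\vee \delta}$. Applying $R\pi_*$ yields a hypercohomology spectral sequence
$$E_1^{-k, q} = \wedge^k V \otimes R^q \pi_*\bigl((Q^\vee)^{\otimes k} \otimes T^{\vee \delta}\bigr).$$
Each inner pushforward is a Borel--Weil--Bott calculation on $\Gr(r-1,r) = \operatorname{GL}(r)/P$ applied to the Levi weight $(\delta_1, \ldots, \delta_{r-1}, k)$: either the $\rho$-shifted weight $(\delta_1 + r - 1, \ldots, \delta_{r-1} + 1, k)$ has a repeated entry (precisely when $k \in \{\delta_i + r - i\}_{i=1}^{r-1}$) and the contribution vanishes, or $k$ has a unique insertion position $j \in \{1, \ldots, r\}$ making the $\rho$-shifted sequence strictly decreasing, in which case the cohomology is concentrated in degree $q = r - j$ with output Schur power $(S^{(r)\vee})^{\mu}$ determined by the standard $\rho$-subtraction.

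For the bookkeeping, for each insertion position $j$ the valid Koszul indices form the interval $[\delta_j + r - j + 1, \delta_{j-1} + r - j]$ (with conventions $\delta_0 = \infty$ capped by $k \leq d$ and $\delta_r = -1$). As $j$ ranges over $\{1, \ldots, r\}$, the induced total degrees $t = -k + (r-j)$ hit each integer in $[r - 1 - d, 0]$ exactly once, so each total degree hosts a single nonzero $E_1$-term. All higher differentials then vanish for degree reasons, the spectral sequence collapses at $E_1$, and the resulting complex in total degree is the desired $\mcE_\bullet$. Re-indexing $\kappa := -t \in \{0, \ldots, d-r+1\}$, the inequality $\delta_j + 1 \leq \kappa \leq \delta_{j-1}$ gives $j = \col{\kappa}{\delta} + 1$ directly, hence $k = r + \kappa - (\col{\kappa}{\delta} + 1) = s_\kappa$, and the entries of $\mu$ read off via the $\rho$-recipe match the rows of $\delta_\kappa$ prescribed by \eqref{equation.diagram_algorithm} on the nose.

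The main obstacle is to certify that the resulting complex genuinely resolves $j_*(T^{\vee \delta})$ in degree zero, rather than merely $Rj_*(T^{\vee \delta})$: combinatorially the total degrees land in $[r-1-d, 0]$, so a priori one must argue that the terms in negative total degree assemble into an exact subcomplex rather than surviving as higher derived pushforwards. I would handle this by noting that the fibres of $j$ over the deeper rank strata are smaller Grassmannians on which the relevant Schur-powered bundles have vanishing higher cohomology---another Borel--Weil--Bott check---which yields the required exactness stratum by stratum and pins down the output as a genuine resolution.
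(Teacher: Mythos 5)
Your overall route---factor $j$ through the relative Grassmannian $\Gr(r-1,S^{(r)})$, resolve the incidence locus by the Koszul complex of the regular section of $V^\vee\otimes Q$, push down, and identify the terms by Borel--Weil--Bott---is exactly the route of the cited proof in \cite[Appendix A]{donovan:2012wj}, which follows Weyman's geometric technique, and your combinatorial bookkeeping (insertion position $j=\col{\kappa}{\delta}+1$, Koszul index $s_\kappa$, output weight matching \eqref{equation.diagram_algorithm}) is correct. The gap is in the assembly step. It is not true that \emph{all higher differentials vanish for degree reasons}: at the boundary between two consecutive insertion-position intervals, the unique nonzero $E_1$-terms in adjacent total degrees sit at positions differing by $(1+m,-m)$ with $m\geq 1$, which is precisely the bidegree of $d_{1+m}$. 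Worse, the spectral sequence cannot collapse at $E_1$: its abutment is $R^{p+q}j_*$ of a sheaf, which vanishes in negative degrees, while your $E_1$-page has a nonzero entry in every total degree down to $r-1-d$; all of those entries must therefore die by $E_\infty$, so there is massive cancellation rather than degeneration. And even if the sequence did degenerate, that would only compute the cohomology sheaves of $\RDerived j_*$; it would not hand you a complex of vector bundles with the stated terms.

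The correct tool is Weyman's basic theorem \cite[Theorem 5.1.2]{Weyman:2007wu} (in its relative form), which is what the cited proof invokes: because each Koszul term pushes forward under $\RDerived\pi_*$ to a single vector bundle concentrated in a single cohomological degree, and the resulting total degrees $-\kappa$ are pairwise distinct and fill $[-(d-r+1),0]$, the Postnikov system obtained by applying $\RDerived\pi_*$ to the Koszul filtration has shifted bundles as its pieces, one per degree, and (using vanishing of negative-degree $\Hom$'s between bundles) assembles into an honest complex $\mcE_\bullet$ quasi-isomorphic to $\RDerived j_*\left((S^{(r-1)\vee})^{\delta}\right)$. Once this is in place, your \emph{main obstacle} evaporates by pure degree-counting: the complex lives in cohomological degrees $\leq 0$ while $\RDerived j_*$ of a sheaf has cohomology only in degrees $\geq 0$, so the complex is exact away from degree $0$ and is a genuine resolution of $j_*$. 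Note that your worry about terms in negative total degree \emph{surviving as higher derived pushforwards} has the signs backwards ($R^{<0}j_*$ vanishes automatically), so the stratum-by-stratum fibre-cohomology check you propose addresses a non-issue while leaving the real one---the existence of the complex itself---unresolved.
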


The following corollary then yields exact sequences on $\Gr(V,r)$:

\begin{cor}\label{cor.sequences}
Take $\delta$ a Young diagram with shape as above in \eqref{equation.lascoux_shape_restriction}. Then we have an exact sequence on $\Gr(V,r)$, as follows:
\begin{equation*} \begin{tikzcd}[column sep=14pt] 0 \rar & S^{\vee \delta_{K}} \otimes \wedge^{s_{K}} V  \rar &\;   \ldots\;  \rar & S^{\vee \delta_1} \otimes \wedge^{s_1} V  \rar & S^{\vee \delta} \rar &  0, \end{tikzcd} \end{equation*}
where
\begin{itemize}
\item the $\delta_k$ are defined as previously by \eqref{equation.diagram_algorithm},
\item $s_k := r+k-(\col{k}{\delta}+1)$ as above,
\item $K := d-r+1$, and
\item $S$ denotes the tautological quotient bundle on $\Gr(V,r)$.
\end{itemize}
\end{cor}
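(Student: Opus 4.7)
The plan is to deduce the corollary by restricting the resolution of Theorem~\ref{theorem.freeresolutions} along the open immersion $\Gr(V,r) \into \mfT$ cut out by the surjectivity condition on $V \to S^{(r)}$. Pullback along an open immersion is exact, so the complex $\mcE_\bullet$ restricts to an exact complex on $\Gr(V,r)$ whose only potential nonzero cohomology is the pullback of the resolved sheaf $j_* \!\left((S^{(r-1)\vee})^{\delta}\right)$ in degree zero.

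The key observation is that this pullback vanishes. The morphism $j$ factors through the closed substack of $\mfT$ consisting of maps $V \to S^{(r)}$ of rank at most $r-1$, since any composition $V \to S^{(r-1)} \into S^{(r)}$ is bounded in rank by $\dim S^{(r-1)} = r-1$. Hence the support of $j_*$ of any sheaf lies in the complement of $\Gr(V,r) \subset \mfT$, and its restriction to the surjective locus is zero. Consequently the restricted complex is a resolution of zero, that is, an exact sequence on $\Gr(V,r)$.

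It remains to identify the terms. On $\mfT$ the tautological bundle $S^{(r)}$ corresponds to the defining representation of $\operatorname{GL}(S^{(r)})$; over the open locus of surjective maps $V \onto S^{(r)}$ this is identified with the universal rank-$r$ quotient of $V$, namely the tautological quotient bundle $S$ on $\Gr(V,r)$. Thus each $(S^{(r)\vee})^{\delta_k}$ restricts to $S^{\vee \delta_k}$, while the external factors $\wedge^{s_k} V$ are trivial bundles pulled back from a point. The indexing data $\delta_k$, $s_k$, and the length $K = d-r+1$ carry over directly from the theorem under the shape hypothesis~\eqref{equation.lascoux_shape_restriction}. The only nontrivial step is the rank count showing that the image of $j$ is disjoint from the surjective locus; everything else is routine bookkeeping.
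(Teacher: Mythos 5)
Your proposal is correct and follows the same route as the paper: the paper's proof is precisely that the support of anything in $\Im(j_*)$ lies in the locus removed from $\mfT$ to obtain $\Gr(V,r)$, so restricting the resolution of Theorem~\ref{theorem.freeresolutions} to the surjective locus yields an exact sequence. Your additional details (the rank-$\leq r-1$ argument for why the image of $j$ misses the surjective locus, and the identification of $S^{(r)}$ with the tautological quotient bundle there) simply make explicit what the paper leaves implicit.
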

\begin{proof}This follows immediately, as the support of any object in $\Im(j_*)$, and in particular the sheaf \eqref{equation.resolved_sheaf}, lies entirely in the locus which is removed from $\mcT$ to give the Grassmannian $\Gr(V,r)$.
\end{proof}

\begin{eg}We illustrate in Figure~\ref{figure.algorithm} below the Young diagrams $\delta_k$ appearing in Theorem~\ref{theorem.freeresolutions} and Corollary~\ref{cor.sequences} above, as defined by \eqref{equation.diagram_algorithm}, taking $\delta=(3,1)$, and $d=7$, $r=3$.
\end{eg}

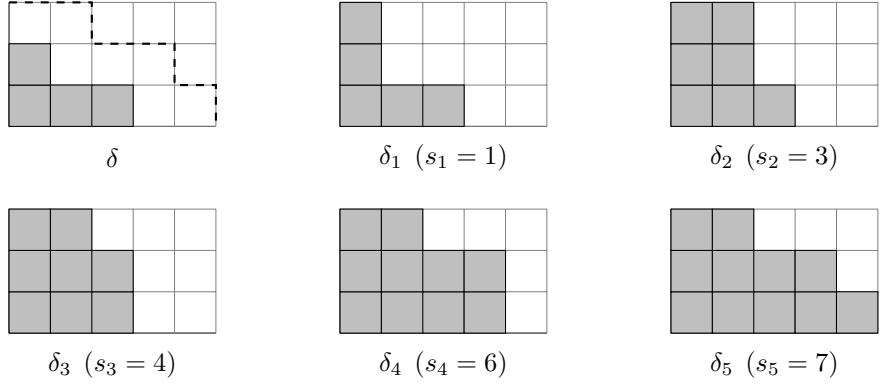
\begin{figure}[H]
\begin{tikzpicture}[auto,>=latex',scale=0.55]
\YoungDiag{delta_0}{0}{0}{5}{3}{{3,1}}{$\delta$}
\YoungDiag{delta_1}{8}{0}{5}{3}{{3,1,1}}{$\delta_1\:\:(s_1=1)$}
\YoungDiag{delta_2}{16}{0}{5}{3}{{3,2,2}}{$\delta_2\:\:(s_2=3)$}
\YoungDiag{delta_3}{0}{-5}{5}{3}{{3,3,2}}{$\delta_3\:\:(s_3=4)$}
\YoungDiag{delta_4}{8}{-5}{5}{3}{{4,4,2}}{$\delta_4\:\:(s_4=6)$}
\YoungDiag{delta_5}{16}{-5}{5}{3}{{5,4,2}}{$\delta_5\:\:(s_5=7)$}

\draw[dashed, line width=0.8pt] (0,3) -- (2,3) -- (2,2) -- (4,2) -- (4,1) -- (5,1) -- (5,0);
\end{tikzpicture}
\caption{Young diagrams $\delta_k$ for $\delta=(3,1)$, with $d=7$, $r=3$. The dashed line, in the diagram for $\delta$, indicates where boxes are added to produce each $\delta_k$.}
\label{figure.algorithm}
\end{figure}

\begin{eg}\label{eg.long_euler_sequence}Taking $\delta=(0)$ with $r=1$ we immediately see that $\delta_k=(k)$. Corollary~\ref{cor.sequences} then recovers the following well-known exact sequences on the projective space of quotients $\PP^\vee V$, where $l$ denotes the tautological quotient bundle:
\begin{equation*} \begin{tikzcd}[column sep=14pt] 0 \rar & l^{\vee d} \otimes \wedge^d V  \rar &\;   \ldots\;  \rar & l^{\vee} \otimes V  \rar &   \cO \rar &  0 \end{tikzcd} \end{equation*}
\end{eg}

\begin{eg}\label{eg.grassmannian_sequences} (cf. \cite[Examples A.8, A.9]{donovan:2012wj}) Taking $d=4$ and $r=2$, Corollary~\ref{cor.sequences} gives exact sequences
$$ \begin{gathered} 
\begin{tikzcd}[column sep=14pt] 0 \rar & S^{\vee (3,1)} \otimes \wedge^4 V \rar & S^{\vee (2,1)} \otimes \wedge^3 V \rar & S^{\vee (1,1)} \otimes \wedge^2 V \rar & S^{\vee (0,0)} \rar & 0 \end{tikzcd} \\
\begin{tikzcd}[column sep=14pt]0 \rar &  S^{\vee (3,2)} \otimes \wedge^4 V \rar & S^{\vee (2,2)} \otimes \wedge^3 V \rar & S^{\vee (1,1)} \otimes V \rar & S^{\vee (1,0)} \rar & 0 \end{tikzcd} \\
\begin{tikzcd}[column sep=14pt]0 \rar &  S^{\vee (3,3)} \otimes \wedge^4 V \rar & S^{\vee (2,2)} \otimes \wedge^2 V \rar & S^{\vee (2,1)}  \otimes V \rar & S^{\vee (2,0)} \rar & 0 \end{tikzcd} \end{gathered} $$
with $\delta=(0,0)$, $(1,0)$ and $(2,0)$ respectively.
\end{eg}

\begin{remark}Notice that each of these sequences lies in two consecutive windows: all but the last term lie in $\cW_{+1}$, and all but the first lie in $\cW_0$. This observation will be key in what follows.
\end{remark}

\section{Windows and twists}
\label{section.windows_and_twists}
\subsection{Transfer functors}

\begin{defn}[transfer functor]\label{defn.transfer_functor} A functor $\Phi \in \End(\D (\mathcal{X}))$ which
\begin{enumerate}
\item restricts to a functor $\cW_k \to \cW_l$, and
\item such that the following diagram commutes
\begin{equation}\label{equation.transfer_functor_diagram} \begin{tikzcd}
\D(X^+) \dar[swap]{\phi} & \cW_k \lar{\sim}[swap]{\LDerived i^*_+} \rar{\LDerived i^*_-}[swap]{\sim} \dar{\Phi} & \D(X^-) \dar[equals] \\
\D(X^+) & \D (\mathcal{X}) \lar[swap]{\LDerived i^*_+} \rar{\LDerived i^*_-}  & \D(X^-)
\end{tikzcd} \end{equation}
for some endofunctor $\phi \in \End(\D (X^+))$,
\end{enumerate}
is referred to as a \emphasis{transfer functor} for $\phi$.
\end{defn}

We think of $\Phi$ as transferring from window $\cW_k$ to $\cW_l$ in a manner compatible with the endomorphism $\phi$ on $\D(X^+)$. The following proposition then allows us to prove Theorem~\ref{theorem.twist_cotwist}, by constructing suitable transfer functors in our examples:

\begin{prop}Given a transfer functor $\Phi$ for $\phi$ as in Definition~\ref{defn.transfer_functor} above, we have $$\omega_{l,k} = \phi.$$
\end{prop}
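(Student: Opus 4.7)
The argument is essentially a diagram chase; the task is to unpack the definitions of the $\psi_k$ (as given by the triangle in Section~\ref{section.window_def}) and then read off the identification of $\omega_{l,k}$ with $\phi$ from the transfer square~\eqref{equation.transfer_functor_diagram}. I will therefore proceed as follows.

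First, I would record what Lemma~\ref{lemma.splitting} gives: for each $m \in \Z$, the restriction functors $\LDerived i^*_\pm\vert_{\cW_m}$ are quasi-inverses of splittings, so they have genuine (two-sided) inverses as functors $\D(X_\pm) \to \cW_m$. Using this, the commutative triangle defining $\psi_m$ unpacks as
\[\psi_m = \LDerived i^*_-\vert_{\cW_m} \circ (\LDerived i^*_+\vert_{\cW_m})^{-1}, \qquad \psi_m^{-1} = \LDerived i^*_+\vert_{\cW_m} \circ (\LDerived i^*_-\vert_{\cW_m})^{-1}.\]
I would then write out
\[\omega_{l,k} = \psi_l^{-1} \circ \psi_k = \LDerived i^*_+\vert_{\cW_l} \circ (\LDerived i^*_-\vert_{\cW_l})^{-1} \circ \LDerived i^*_-\vert_{\cW_k} \circ (\LDerived i^*_+\vert_{\cW_k})^{-1}.\]

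Next, I would extract from the transfer square two equalities of functors $\cW_k \to \D(X_\pm)$. The right square says $\LDerived i^*_-\vert_{\cW_l} \circ \Phi\vert_{\cW_k} = \LDerived i^*_-\vert_{\cW_k}$ (since the right-hand vertical arrow is the identity), and since $\LDerived i^*_-\vert_{\cW_l}$ is invertible this gives
\[\Phi\vert_{\cW_k} = (\LDerived i^*_-\vert_{\cW_l})^{-1} \circ \LDerived i^*_-\vert_{\cW_k}.\]
The left square says $\LDerived i^*_+\vert_{\cW_l} \circ \Phi\vert_{\cW_k} = \phi \circ \LDerived i^*_+\vert_{\cW_k}$.

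Finally, substituting the expression for $\Phi\vert_{\cW_k}$ into the middle two factors of the expanded $\omega_{l,k}$ gives
\[\omega_{l,k} = \LDerived i^*_+\vert_{\cW_l} \circ \Phi\vert_{\cW_k} \circ (\LDerived i^*_+\vert_{\cW_k})^{-1} = \phi \circ \LDerived i^*_+\vert_{\cW_k} \circ (\LDerived i^*_+\vert_{\cW_k})^{-1} = \phi,\]
as required. There is no real obstacle here beyond keeping the bookkeeping of invertible functors straight; the content of the proposition is entirely absorbed by Lemma~\ref{lemma.splitting}, which guarantees that the restrictions $\LDerived i^*_\pm\vert_{\cW_m}$ used above are equivalences on the nose.
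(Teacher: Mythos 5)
Your diagram chase is correct and is exactly the formal argument the paper has in mind: the paper itself only says ``this follows formally'' and defers to \cite[Proposition 2.2]{donovan:2012wj}, and your unpacking of the transfer square into the two identities and the substitution into $\omega_{l,k}=\psi_l^{-1}\psi_k$ is that formal verification. The only cosmetic caveat is that the ``inverses'' of $\LDerived i^*_\pm\vert_{\cW_m}$ are quasi-inverses, so all equalities of functors should be read as natural isomorphisms, which is how the conclusion $\omega_{l,k}=\phi$ is meant anyway.
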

\begin{proof}This follows formally. See \cite[Proposition 2.2]{donovan:2012wj}.
\end{proof}

\subsection{Hecke correspondences}
\label{section.correspondences}
We now introduce natural correspondences which may be used to construct transfer functors in our examples. We put:
$$\mfZ := \quotStack{\Hom(S^{(r)}, V) \spacedoplus \Hom(V, S^{(r-1)}) \spacedoplus \Hom_{\into}(S^{(r-1)}, S^{(r)})}{\operatorname{GL}(S^{(r-1)})\spacedtimes \operatorname{GL}(S^{(r)}) }.$$
Observe that $\mfZ$ is equipped with natural morphisms to $\mfX^{(r-1)}$ and $\mfX^{(r)}$, given by composition of linear maps, which we denote as follows:
$$\mfX^{(r-1)} \overset{\pi}{\longleftarrow} \mfZ \overset{j}{\longrightarrow} \mfX^{(r)}.$$
By passing through the correspondence $\mfZ$, we then obtain a functor $$\mathcal{F}(r) : \D(\mfX^{(r-1)}) \To \D(\mfX^{(r)}),$$ defined by $\mathcal{F}(r) := \RDerived j_* \circ \LDerived \pi^*$.

If we replace $\Hom(S^{(r)}, V)$ in the definition of $\mfZ$ with $\Hom_{\into}\left(S^{(r)},V\right)$, then we obtain a space, say $Z_+$, now with natural morphisms to $X^{(r-1)}_+$ and $X^{(r)}_+$. By passing through the correspondence $Z_+$, we obtain a functor $$F(r): \D(X_+^{(r-1)}) \To \D(X_+^{(r)}).$$

Fixing now a particular $r$, and dropping this from the notation for simplicity, we have:

\begin{prop}[ { \cite[Section 3.2.2]{donovan:2012wj} } ]
\label{prop.twist_transfer_functor}
$T_{\mathcal{F}}$ is a transfer functor for $T_F$ mapping window $\cW_{+1}$ to $\cW_0$.
\end{prop}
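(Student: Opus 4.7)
My plan is to split the proof into two parts: (i) verify the commutativity of the diagram in Definition~\ref{defn.transfer_functor} with $\phi = T_F$, and (ii) verify that $T_{\mathcal{F}}$ restricts to a functor $\cW_{+1} \to \cW_0$.

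For (i), I handle the two squares by separate base-change arguments. Over $X_-^{(r)}$, I observe that the preimage $j^{-1}(X_-^{(r)})$ is empty inside $\mfZ$: a point there would force the composite $\iota \circ \beta : V \to S^{(r)}$ to be surjective, which is impossible because $\iota$ has rank $r-1$, strictly smaller than the rank of $S^{(r)}$. Base change along the open immersion $i_-$ therefore yields $\LDerived i^*_- \circ \RDerived j_* = 0$, hence $\LDerived i^*_- \circ \mathcal{F} = 0$; the cone defining $T_{\mathcal{F}} = \cone{\mathcal{F}\mathcal{R} \To \id}$ then collapses under $\LDerived i^*_-$ to give $\LDerived i^*_- \circ T_{\mathcal{F}} \simeq \LDerived i^*_-$. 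Over $X_+^{(r)}$, the preimage $j^{-1}(X_+^{(r)})$ is exactly $Z_+$, and $\pi$ carries $Z_+$ into $X_+^{(r-1)}$, so the correspondences fit into compatible Cartesian squares. Flat base change then gives $\LDerived i^*_+ \circ \mathcal{F} \simeq F \circ \LDerived i^*_+$; the analogous identity for the right adjoints follows; and compatibility of the two counits allows the cone to commute with restriction, producing $\LDerived i^*_+ \circ T_{\mathcal{F}} \simeq T_F \circ \LDerived i^*_+$.

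For (ii), I check the containment on the generating bundles $E = S^{\vee \delta} \otimes \det(S^\vee)$ of $\cW_{+1}$, with $\row{\bullet}{\delta} \leq d - r$ and $\col{\bullet}{\delta} \leq r$. The object $\mathcal{R}(E) \in \D(\mfX^{(r-1)})$ can be computed from the kernel description of $\mathcal{R}$ and identified, up to a homological shift and a determinant twist, with an explicit Schur power of $S^{(r-1)\vee}$ indexed by a Young diagram naturally associated to $\delta$. Pushing forward by $\mathcal{F}$ then presents $\mathcal{F}\mathcal{R}(E)$ as a $j_*$-image of precisely the kind that Theorem~\ref{theorem.freeresolutions} resolves; the twisted Lascoux complex exhibits it as quasi-isomorphic to a complex of the shape $\bigl[ S^{\vee \delta_K} \otimes \wedge^{s_K} V \To \cdots \To S^{\vee \delta_1} \otimes \wedge^{s_1} V \bigr]$ on $\mfX^{(r)}$. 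For the specific input diagram arising from this computation, inspection of the algorithm \eqref{equation.diagram_algorithm} shows that every $\delta_k$ satisfies the $\cW_0$-shape constraints, so all terms of this resolution lie in $\cW_0$. Finally, identifying the counit map $\mathcal{F}\mathcal{R}(E) \To E$ with the terminal differential of the Lascoux resolution, the cone $T_{\mathcal{F}}(E)$ is represented by this complex (suitably shifted) and therefore belongs to $\cW_0$, as required.

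The crucial and most technical step is matching the counit of the $(\mathcal{F}, \mathcal{R})$-adjunction with the terminal differential of the twisted Lascoux complex; this requires careful tracking of adjunction data through the stacky correspondence $\mfZ$, together with alignment with the Borel--Weil--Bott computation underlying Theorem~\ref{theorem.freeresolutions}. Once this identification is in place, the combinatorial shape check on the $\delta_k$ and the base-change arguments of (i) together complete the proof.
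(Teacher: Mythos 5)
Your strategy coincides with the paper's: the right-hand square of \eqref{equation.transfer_functor_diagram} is handled by observing that the image of $j$ misses $X_-^{(r)}$, the window containment is proved on generators by applying the twisted Lascoux complexes to the counit $\mathcal{F}\mathcal{R}(\mcE)\To\mcE$, and you correctly single out the identification of that counit with the terminal Lascoux differential as the technical core. Two of your assertions, however, are not justified as stated.

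The genuine gap is the sentence ``the analogous identity for the right adjoints follows.'' Only one of your two squares is Cartesian: $j^{-1}(X_+^{(r)})=Z_+$ holds, and base change along this open immersion does give $\LDerived i_+^*\circ\mathcal{F}\simeq F\circ\LDerived i_+^*$; but $\pi^{-1}(X_+^{(r-1)})$ is the locus where the composite $S^{(r-1)}\into S^{(r)}\to V$ is injective, which strictly contains $Z_+$ (the map $S^{(r)}\to V$ need not itself be injective there), so the square over $X_+^{(r-1)}$ is \emph{not} Cartesian and the commutation $\LDerived i_+^*\circ\mathcal{R}\simeq R\circ\LDerived i_+^*$ is not formal. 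One needs the additional vanishing statement that $\mathcal{R}$, applied to objects supported on the unstable locus of $\mfX^{(r)}$, restricts to zero on $X_+^{(r-1)}$ --- these are exactly the ``certain vanishing results'' the paper invokes via \cite[Lemma 3.17]{donovan:2012wj} --- and without this the left-hand square of \eqref{equation.transfer_functor_diagram} is not established. A smaller omission: in part (ii) you treat all generators of $\cW_{+1}$ uniformly, but for generators already lying in $W_0$ the correct statement is that $\mathcal{F}\mathcal{R}(\mcE)$ vanishes outright (so $T_{\mathcal{F}}(\mcE)\simeq\mcE$), and the Lascoux mechanism is only invoked for the generators of $W_{+1}$ outside the overlap; your claim that $\mathcal{R}(\mcE)$ is always a shifted, twisted Schur power needs this case distinction to be accurate.
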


We say a few words about the proof of Proposition \ref{prop.twist_transfer_functor} here:

\begin{enumerate}
\item The proof that $T_{\mathcal{F}}$ maps window $\cW_{+1}$ to $\cW_0$
is given in \cite[Lemma 3.16]{donovan:2012wj}. The core of the proof is that for a generator $\mcE \in W_{+1}$ (as in Example~\ref{eg.Gr_windows}) we have
$$\mathcal{F}\mathcal{R} (\mcE) = \begin{cases} 0 & \mcE \in W_0, \\ \{\mcE \To \ldots \} & \mcE \not\in W_0, \end{cases}$$
where the omitted part of the complex is given by elements of $W_0$ (tensored by certain fixed vector spaces), and the generator $\mcE$ appears in homological degree $0$. This all follows using the Lascoux resolutions of Section~\ref{section.twist_Lascoux}. We hence deduce that $T_{\mathcal{F}} (\mcE) \in \cW_0$ as required. 
\item The commutativity of the left-hand square in diagram \eqref{equation.transfer_functor_diagram} is \cite[Lemma 3.17]{donovan:2012wj}: it follows from the formal similarity between $T_{\mcF}$ and $T_F$, and certain vanishing results. The commutativity of the right-hand square follows immediately from considering the support of objects in $\Im(\mathcal{F})$: by construction, they lie entirely in the locus which we remove to yield $X_-$. \end{enumerate}

\begin{remark} The cotwist proof is similar, though somewhat more delicate: see \cite[Section 3.2.3]{donovan:2012wj}.\end{remark}

\appendix

\section{Window shift examples}
\label{section.window_shift_examples}

Finally, we give some examples of the action of window-shift autoequivalences, consistent with the twist action described in the previous Section~\ref{section.windows_and_twists}. Complexes surrounded by braces denote objects of the derived category, with the left-most term in homological degree $0$.

\begin{eg}[$r=1$]Example~\ref{eg.P_windows} gives that the window $\cW_{+1}$ has generators $\mathcal{E} = l^{\vee k}$ for $1 \leq k \leq d$. From the definitions we have that $\psi_{+1} \left( l^{\vee k} \right) \iso l^{\vee k}$ and thence we obtain:

$$\omega_{0,+1} \left( l^{\vee k} \right) = \begin{cases} l^{\vee k} & k = 1, \ldots, d-1, \\ \{ \begin{tikzcd}[column sep=14pt] l^ {\vee d-1} \otimes \wedge^{d-1} V \rar &\;   \ldots\;  \rar & l^{\vee} \otimes V  \rar &  \cO \end{tikzcd} \} & k=d. \end{cases}$$

The only non-trivial part of this calculation is to use the exact sequence of Example~\ref{eg.long_euler_sequence} on $X^{-}$ (we have to pull it up from $\PP^\vee V$, and choose an isomorphism $\det V \iso \C$) to express $l^{\vee d}$ as a complex of vector bundles in the window $\mcW_0$. See \cite[{Section 2.1.1}]{donovan:2012wj} for the case $d=2$.
\end{eg}

\begin{eg}[$d=4$, $r=2$]Example~\ref{eg.Gr_windows} gives generators $\mcE = S^{\vee \delta}$ for the window $\cW_{+1}$ with certain $\delta$. For these $\delta$ we have:
$$\omega_{0,+1} \left( S^{\vee \delta} \right) = \begin{cases}  S^{\vee \delta} & \delta = (1,1), \: (2,2), \: (2,1), \\
\{ \begin{tikzcd}[column sep=14pt] S^{\vee (2,2)} \otimes \wedge^2 V \rar & S^{\vee (2,1)}  \otimes V \rar & S^{\vee (2,0)} \end{tikzcd} \} & \delta=(3,3), \\
\{ \begin{tikzcd}[column sep=14pt] S^{\vee (2,2)} \otimes \wedge^3 V \rar & S^{\vee (1,1)} \otimes V \rar & S^{\vee (1,0)} \end{tikzcd} \} & \delta=(3,2), \\
\{ \begin{tikzcd}[column sep=14pt] S^{\vee (2,1)} \otimes \wedge^3 V \rar & S^{\vee (1,1)} \otimes \wedge^2 V \rar & S^{\vee (0,0)} \end{tikzcd} \} & \delta=(3,1). \end{cases}$$

The calculation proceeds as above, this time using the exact sequences in Example~\ref{eg.grassmannian_sequences}. Similar calculations (with a twist) may be found in \cite[{Section 2.1.2}]{donovan:2012wj}.

\end{eg}


\bibliographystyle{\BibliographyLocation/Styles/halpha}

\newpage

\opt{lms}{
\affiliationone{Will Donovan\\
\affiliationAddress.\\
\affiliationCountry\\
\email{\affiliationEmail}}
}

\end{document}